\documentclass[11pt,leqno]{amsart}%
\usepackage{amsmath}
\usepackage{amsfonts}
\usepackage{amssymb}
\usepackage{graphicx}%
\setcounter{MaxMatrixCols}{30}

\providecommand{\U}[1]{\protect\rule{.1in}{.1in}}

\newtheorem{theorem}{Theorem}

\newtheorem{corollary}[theorem]{Corollary}

\newtheorem{definition}[theorem]{Definition}

\newtheorem{proposition}[theorem]{Proposition}
\newtheorem{remark}[theorem]{Remark}

\begin{document}
\title[$L^1$-Liouville property and stochastic incompleteness]{On the $L^{1}$-Liouville property of stochastically incomplete manifolds}
\author{G. Pacelli  Bessa}
\address{Departamento de Matem\'atica \\Universidade Federal do Cear\'a-UFC\\
60455-760 Fortaleza, CE, Brazil}
\email{bessa@mat.ufc.br}
\author{Stefano Pigola}
\address{Sezione di Matematica - DiSAT\\
Universit\`a dell'Insubria - Como\\
via Valleggio 11\\
I-22100 Como, ITALY}
\email{stefano.pigola{@}uninsubria.it}
\author{Alberto G. Setti}
\address{Sezione di Matematica - DiSAT\\
Universit\`a dell'Insubria - Como\\
via Valleggio 11\\
I-22100 Como, ITALY}
\email{alberto.setti@uninsubria.it}
\begin{abstract}
A classical result by Alexander Grigor'yan states that on a stochastically complete manifold the non-negative superharmonic  $L^1$-functions are necessarily constant. In this paper we address the question of whether and to what extent the reverse implication holds.
\end{abstract}
\date{\today}
\thanks{The first named author acknowledges the hospitality of the University of Insubria}
\subjclass[2010]{58J65, 31C12}
\keywords{$L^1$-Liouville property, stochastic completeness, mean exit time}
\maketitle
\tableofcontents

\section*{Introduction}

Let $\left(M,g\right)  $ be an $m$-dimensional Riemannian manifold. We use
the symbol $\Delta$ to denote the negative-definite Laplace-Beltrami operator of $M$.
Thus, if $M=\mathbb{R}$, $\Delta=+d^{2}/dx^{2}$. By a superharmonic function
we mean a function $u\in C^{0}\left(  M\right)  \cap W_{loc}^{1,2}\left(
M\right)  $ satisfying $\Delta u\leq0$ in the sense of distributions, namely,%
\[
-\int_{M}\left\langle \nabla u,\nabla\varphi\right\rangle dv\leq0,
\]
for every $\varphi\in W_{c}^{1,2}\left(  M\right)  $. By reversing the
inequality we obtain the notion of subharmonic function and by replacing the
inequality with an equality we get a harmonic function. By elliptic
regularity, harmonic functions are necessarily smooth.

In general, there is no obstruction for a manifold to support many
(super)harmonic functions. Indeed, according to a theorem due to R. E. Greene and H. Wu
\cite{GreeneWu}, any $m$-dimensional Riemannian manifold can  be embedded into $\mathbb{R}^{2m+1}$
by harmonic functions.
On the other hand, the presence of superharmonic functions enjoying some special property is
intimately related to the geometry
of the underlying space. Thus, for instance, if the geodesically complete
manifold $\left(  M, g  \right)  $ supports a
non-constant, positive superharmonic function then $M$ is non-parabolic and,
in particular,%
\[
\int^{+\infty}\frac{r}{\mathrm{vol}\left(  B_{r}\left(  o\right)  \right)
}dr<+\infty,
\]
for some origin $o\in M$. Here, $B_{r}\left(  o\right)  $ denotes the geodesic
ball of $M$ centered at $o$ and of radius $r>0$. In this spirit one gives the following

\begin{definition}
A smooth Riemannian manifold $\left(  M,g\right)  $ is said to satisfy the $L^{1}
$-Liouville property, $($shortly, $M$ is $L^1$-Liouville$)$, if every non-negative superharmonic function $0\leq u\in
L^{1}\left(  M\right)  $ must be constant.
\end{definition}

According to a nice result by Alexander Grigor'yan \cite{grigoryan-Uzvestiya}, later extended
to non-linear operators modeled on the $p$-Laplacian (see \cite{holopainen-israel}, \cite{prs-revista})
in order to understand whether or not a manifold is $L^{1}$-Liouville one may
simply consider the behavior of its Green kernel $G\left(  x,y\right)  $.
We recall that this latter is the minimal, positive, fundamental solution of $-\Delta$.

\begin{theorem}
\label{th_green}The Riemannian manifold $\left(  M,g\right)  $ is $L^{1}%
$-Liouville if and only if, for some (hence any) $x\in M$,
\[
\int_{M}G\left(  x,y\right)  dv\left(  y\right)  =+\infty.
\]

\end{theorem}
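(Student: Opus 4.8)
The plan is to prove the two implications separately; since proving the equivalence for one fixed $x$ at once gives it for every $x$, this also disposes of the ``some (hence any)'' clause. We may assume $M$ is non-parabolic, for if $M$ is parabolic then every non-negative superharmonic function is already constant and $G$ is unavailable, so both sides of the statement hold under the usual convention $\int_M G\equiv+\infty$. We use freely that the minimal Green kernel is positive, symmetric, smooth off the diagonal, locally integrable and satisfies $\inf_M G(x,\cdot)=0$, and that the distributional Laplacian of a continuous superharmonic function is a non-positive Radon measure. \emph{First implication} ($L^1$-Liouville $\Rightarrow \int_M G(x,y)\,dv(y)=+\infty$): I argue by contraposition. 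Suppose $G(x,\cdot)\in L^1(M)$ and set $u=\min\{G(x,\cdot),1\}$. Near the pole $G(x,\cdot)>1$, so $u\equiv 1$ there and $u\in C^0(M)$; $u$ is superharmonic (locally the minimum of a harmonic function and a constant away from $x$, locally constant near $x$) and $u\in W^{1,2}_{loc}(M)$ because this class is stable under truncation; moreover $0\le u\le G(x,\cdot)\in L^1(M)$, and $u$ is non-constant, being $\equiv 1$ near $x$ but $<1$ somewhere since $\inf_M G(x,\cdot)=0$. Hence $M$ is not $L^1$-Liouville.

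\emph{Second implication} ($\int_M G(x,y)\,dv(y)=+\infty \Rightarrow L^1$-Liouville): let $0\le u\in L^1(M)$ be superharmonic. Fix an exhaustion $\{\Omega_k\}$ of $M$ by relatively compact open sets with smooth boundary, let $h_k$ be the harmonic function on $\Omega_k$ with $h_k=u$ on $\partial\Omega_k$, and note via the minimum principle that $0\le h_{k+1}\le h_k\le u$ on $\Omega_k$, so $h_k\downarrow h$ with $h$ non-negative harmonic on $M$. On $\Omega_k$ the function $u-h_k$ is non-negative, superharmonic, and vanishes on the regular boundary $\partial\Omega_k$, so its greatest harmonic minorant is $0$ and the Riesz decomposition gives
\[
u-h_k=\int_{\Omega_k}G_{\Omega_k}(\cdot,y)\,d\mu(y),\qquad \mu:=-\Delta u\ge 0,
\]
$G_{\Omega_k}$ denoting the Dirichlet Green kernel of $\Omega_k$. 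Since $G_{\Omega_k}\uparrow G$ as $\Omega_k\uparrow M$, monotone convergence yields the global representation $u=h+\int_M G(\cdot,y)\,d\mu(y)$, all terms being finite because $u<+\infty$ everywhere.

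Integrating over $M$ and using Tonelli together with the symmetry of $G$,
\[
\int_M u\,dv=\int_M h\,dv+\int_M\Big(\int_M G(z,y)\,dv(z)\Big)d\mu(y)=\int_M h\,dv+\int_M E(y)\,d\mu(y),
\]
where $E(y):=\int_M G(z,y)\,dv(z)=\int_M G(y,z)\,dv(z)$. Splitting off a fixed compact set and applying Harnack's inequality to the positive harmonic functions $z\mapsto G(\cdot,z)$ shows $E$ is either finite everywhere or identically $+\infty$; by hypothesis we are in the latter case. Since $\int_M u\,dv<+\infty$ and $\int_M h\,dv\ge 0$, necessarily $\int_M E\,d\mu<+\infty$, which forces $\mu\equiv 0$; thus $u$ is harmonic. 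If $u$ were non-constant, choose $t$ with $\inf_M u<t<\sup_M u$; then $v=\min\{u,t\}\in C^0(M)\cap W^{1,2}_{loc}(M)$ is non-negative, superharmonic, in $L^1(M)$ and non-constant, so by what we just proved $v$ is harmonic. But $v\equiv t$ on the non-empty open set $\{u>t\}$, hence $v\equiv t$ on $M$ by the maximum principle, contradicting $v<t$ on the non-empty set $\{u<t\}$. Therefore $u$ is constant, and $M$ is $L^1$-Liouville.

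\emph{Main obstacle.} The substantive part is the second implication, and within it the passage from the local Riesz decompositions on the $\Omega_k$ to the global identity $u=h+G\mu$ together with the interchange of integrations: this rests on treating $\mu=-\Delta u$ as a Radon measure, on the monotone convergence $\int_{\Omega_k}G_{\Omega_k}(y,\cdot)\,dv\uparrow E(y)$ (the mean exit time picture), and on the uniform Harnack comparison responsible for the $x$-independence of $E$. Once these standard potential-theoretic facts are in place the contradiction is immediate, since $E\equiv+\infty$ while $\int_M u\,dv<+\infty$, and the truncation step reducing the harmonic case to the superharmonic one is elementary.
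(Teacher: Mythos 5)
Your proof is correct, but be aware that the paper itself contains no proof of this statement: Theorem \ref{th_green} is quoted from Grigor'yan \cite{grigoryan-Uzvestiya}, so there is nothing internal to compare against. Measured against the classical argument, your easy direction (truncating $G(x,\cdot)$ at a level below its supremum to manufacture a non-constant, non-negative, integrable superharmonic function) is exactly the standard one. For the hard direction you take a genuinely heavier route than necessary. The classical proof is a one-line comparison: if $0\le u\in L^1$ is superharmonic and non-constant, the minimum principle forces $u>0$, hence $u\ge\varepsilon$ on a small ball $B$ about $x$; comparing $u$ with $c\,G_{\Omega_k}(x,\cdot)$ on $\Omega_k\setminus\overline B$, where $c=\varepsilon/\max_{\partial B}G(x,\cdot)$, and letting $k\to\infty$ gives $u\ge c\,G(x,\cdot)$ off $B$, so $G(x,\cdot)\in L^1(M)$ — and since this works for every pole it also disposes of the ``some, hence any'' clause without a separate Harnack step. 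Your route through the Riesz decomposition $u=h+\int_M G(\cdot,y)\,d\mu(y)$, the Tonelli interchange, and the truncation argument eliminating non-constant harmonic $u$ is sound, but it leans on the full Riesz representation theorem for $C^0\cap W^{1,2}_{loc}$ superharmonic functions, precisely the machinery the direct comparison avoids; what it buys in exchange is that the global mean exit time $E(y)=\int_M G(y,z)\,dv(z)$ appears explicitly, which is the object the paper exploits in Section \ref{section_meanexit}. One pedantic caveat: in dimension one $G(x,\cdot)$ need not exceed $1$ near the pole, so truncate at any level $c<\sup_M G(x,\cdot)$ rather than at $1$.
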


Note that, in case that $M$ is parabolic, we have $G\equiv+\infty$ and the integrability condition is
trivially satisfied. However, in this case, we already know that positive
superharmonic functions (without any further restriction) must be constant.

In \cite{grigoryan-Uzvestiya}, A. Grigor'yan makes a clever use of the equivalence
established in Theorem \ref{th_green} to obtain a neat geometric conditions implying
the $L^{1}$-Liouville
property. This is achieved by relating the (non-)integrability of the Green
function with a further stochastic property of the manifold, namely, its
stochastic completeness. Recall that $\left(  M,g\right)  $ is stochastically
complete (for the Brownian motion with infinitesimal generator $\Delta$) if
for some (hence every) $x\in M$,%
\[
\int_{M}p_{t}\left(  x,y\right)  dv\left(  y\right)  =1,
\]
where $p_{t}\left(  x,y\right)  $ stands for the heat kernel of $M,$ i.e., the
minimal, positive fundamental solution of the heat operator $\Delta
-\partial/\partial t$. From the probabilistic viewpoint, this
means that the explosion time of the Brownian motion on $M$ is almost surely infinite.
Recall also that $G\left(  x,y\right)  $ and
$p_{t}\left(  x,y\right)  $ are related by%
\[
G\left(  x,y\right)  =\int_{0}^{+\infty}p_{t}\left(  x,y\right)  dt.
\]
Therefore, applying Tonelli's Theorem, from Theorem \ref{th_green} \ we immediately deduce

\begin{corollary}
\label{prop_stoch&L1}A stochastically complete manifold is $L^{1}$-Liouville.
\end{corollary}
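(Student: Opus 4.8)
The plan is to deduce this directly from the Green-function characterization in Theorem \ref{th_green}, so that the only real work is a change in the order of integration. Fix a point $x\in M$. By Theorem \ref{th_green}, the manifold is $L^{1}$-Liouville if and only if $\int_{M}G(x,y)\,dv(y)=+\infty$, so it suffices to establish this divergence under the stochastic completeness hypothesis.

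First I would insert the spectral/heat representation of the Green kernel, $G(x,y)=\int_{0}^{+\infty}p_{t}(x,y)\,dt$, into the integral to be estimated. Since $p_{t}(x,y)\geq 0$ and is jointly measurable in $(t,y)$, Tonelli's theorem permits interchanging the two integrations:
\[
\int_{M}G(x,y)\,dv(y)=\int_{M}\Big(\int_{0}^{+\infty}p_{t}(x,y)\,dt\Big)\,dv(y)=\int_{0}^{+\infty}\Big(\int_{M}p_{t}(x,y)\,dv(y)\Big)\,dt .
\]
Next I would invoke stochastic completeness, which by definition says precisely that the inner integral equals $1$ for every $t>0$ (and one choice of $x$, hence for all). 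Therefore the right-hand side is $\int_{0}^{+\infty}1\,dt=+\infty$, and Theorem \ref{th_green} yields the $L^{1}$-Liouville property.

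There is essentially no serious obstacle here; the argument is a one-line consequence of Theorem \ref{th_green} once the heat-kernel representation of $G$ is available. The only points that merit a routine check are the measurability needed to apply Tonelli and the standard fact that stochastic completeness is equivalent to the conservation identity $\int_{M}p_{t}(x,y)\,dv(y)=1$ holding for every $t>0$; both are classical. In short, all the geometric content is absorbed into Theorem \ref{th_green}, and the stochastic completeness hypothesis is used exactly at the level of the time integral $\int_{0}^{+\infty}p_{t}(x,\cdot)\,dt$ of the heat kernel.
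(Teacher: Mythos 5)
Your argument is correct and is exactly the one the paper intends: insert $G(x,y)=\int_0^{+\infty}p_t(x,y)\,dt$, apply Tonelli, use the conservation identity $\int_M p_t(x,y)\,dv(y)=1$ from stochastic completeness, and conclude via Theorem \ref{th_green}. No differences worth noting.
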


In particular, since a geodesically complete manifold $\left(  M,g\right)  $
is stochastically complete provided, for some origin $o\in M$,%
\begin{equation}
\int^{+\infty}\frac{r}{\log\left(  \mathrm{vol}\left(  B_{r}\left(  o\right)
\right)  \right)  }dr=+\infty\label{growth-stochcompl}%
\end{equation}
one may conclude the validity of the next

\begin{corollary}
A geodesically complete Riemannian manifold $\left(  M,g\right)  $ is $L^{1}$-Liouville
provided the volume growth condition (\ref{growth-stochcompl}) is satisfied.
\end{corollary}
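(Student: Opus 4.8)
The plan is to obtain the statement as a formal composition of the two ingredients recalled just above. First I would invoke Grigor'yan's volume test for stochastic completeness \cite{grigoryan-Uzvestiya}: on a geodesically complete manifold, the growth condition $(\ref{growth-stochcompl})$ forces $M$ to be stochastically complete. Then the conclusion is immediate from Corollary \ref{prop_stoch&L1}, according to which every stochastically complete manifold is $L^{1}$-Liouville. In this sense there is essentially nothing to prove in the corollary itself; all of the content sits in the volume test.

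Since that test is the heart of the matter, I would at least indicate how I expect it to go. One reduces stochastic completeness to a Khas'minskii-type uniqueness statement, namely that the only bounded solution $u\ge 0$ of $\Delta u=u$ is $u\equiv 0$ (equivalently, uniqueness of bounded solutions of the heat equation with vanishing initial datum). Given such a $u$, one tests the equation against $u\psi^{2}$, where $\psi=\psi(r(x))$ is a Lipschitz cutoff equal to $1$ on $B_{R}$, supported in $B_{R'}$, with $|\nabla\psi|\le (R'-R)^{-1}$, and integrates by parts. Using $\Delta u=u$ together with the Cauchy--Schwarz and Young inequalities one arrives at a Caccioppoli-type estimate of the form
\[
\int_{B_{R}} u^{2}\,dv \le \frac{C}{\left(R'-R\right)^{2}}\int_{B_{R'}\setminus B_{R}} u^{2}\,dv ,
\]
and one then feeds in the volume growth: setting $I(R)=\int_{B_{R}}u^{2}\,dv$, this forces $I(R')\ge\bigl(1+c(R'-R)^{2}\bigr)I(R)$, so that $I$ grows very fast unless $I\equiv 0$; comparing with the a priori bound $I(R)\le\|u\|_{\infty}^{2}\,\mathrm{vol}(B_{R})$ and choosing a sequence of radii $R_{k}\uparrow+\infty$ adapted to $\mathrm{vol}(B_{r})$, hypothesis $(\ref{growth-stochcompl})$ rules this out and yields $u\equiv 0$. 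This is precisely the step where $(\ref{growth-stochcompl})$ enters and where it is essentially sharp; everything else is the formal machinery recalled in the Introduction, in particular the identity $G=\int_{0}^{+\infty}p_{t}\,dt$ and Tonelli's theorem.

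The hard part, then, is not in the present corollary but in the volume test, which I would simply cite. As an alternative, self-contained route one could bypass stochastic completeness and argue directly from Theorem \ref{th_green}, distinguishing two cases: if $M$ is parabolic then $G\equiv+\infty$ and $\int_{M}G(x,\cdot)\,dv=+\infty$ trivially (and positive superharmonic functions are already constant); if $M$ is non-parabolic one needs a lower bound for $\int_{M}G(x,y)\,dv(y)$ in terms of $\mathrm{vol}(B_{r})$, which, through $G=\int_{0}^{+\infty}p_{t}\,dt$ and Tonelli, reduces to the same heat-kernel volume estimates underlying Grigor'yan's criterion. I would present the short composition argument and remark that the two approaches are, in substance, the same.
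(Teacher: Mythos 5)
Your proposal coincides with the paper's own (one-line) argument: the corollary is obtained precisely by composing Grigor'yan's volume test for stochastic completeness, which the paper simply cites, with Corollary \ref{prop_stoch&L1}. The extra sketch of the volume test is not required (and, as you note, the sharp form of that test is subtler than a plain Caccioppoli iteration), but since you ultimately cite it, the argument is correct and identical in structure to the paper's.
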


So far we have essentially celebrated A. Grigor'yan work on the subject. In this
paper we address the following

\medskip

\noindent \textbf{Problems }
\textit{$($a$)$ Does the converse of Corollary \ref{prop_stoch&L1} hold? $($b$)$ If not, to what
extent and under which conditions the validity of the $L^{1}$-Liouville
property implies that the manifold is stochastically complete?}

\medskip

In the case of a model manifold
\[
M_{\sigma}^{m}=\left(  [0,+\infty)\times\mathbb{S}^{m-1},dt^{2}+\sigma
\left(  t\right)  ^{2}d\theta^{2}\right)  ,
\]
it is easy to see that stochastic completeness
is in fact equivalent to the $L^1$-Liouville property. Indeed,
the Green's kernel with pole at $o$  of  $M_\sigma ^m$
is given by
\[
G(x,o) = c_m \int_{r}^{+\infty}
\frac 1{\sigma^{m-1}(t)  }dt
\]
so that, interchanging the order of integration,
\[
\begin{split}
\int_M G(x,o) dx &= c_m \int_0^\infty \sigma^{m-1} (r)dr
\int_r ^{\infty}\frac 1 {\sigma^{m-1}(t)}dt \\ &= c_m
\int_0^{+\infty} dt \frac{\int_0^t \sigma^{m-1}(r)dr}
{\sigma^{m-1}(t)},
\end{split}
\]
which shows that the condition for stochastic completeness and that
for the validity of the $L^1$-Liouville property of a model manifold coincide.

The investigation around these very natural questions would benefit of
different viewpoints on the notion of stochastic completeness. We will make a
constant use of the following equivalent description in the language of
maximum principles at infinity (see \cite{PRS-PAMS}, \cite{PRS-Memoirs}).

\begin{theorem}
\label{th_stoch&maxprinciple}A Riemannian manifold $\left(
M,g \right)  $ is stochastically complete if and only if, for every
$u\in C^{2}\left(  M\right)  $ satisfying $\sup_{M}u=u^{\ast}<+\infty$, there
exists a sequence $\left\{  x_{k}\right\}  \subset M$ \ along which%
\[
\text{(i) }u\left(  x_{k}\right)  >u^{\ast}-\frac{1}{k}\text{,\qquad(ii)
}\Delta u\left(  x_{k}\right)  <\frac{1}{k}.
\]

\end{theorem}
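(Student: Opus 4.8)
The plan is to prove the two implications of Theorem~\ref{th_stoch&maxprinciple} separately, using as the main bridge the following Khas'minskii-type characterization of stochastic completeness: for one (equivalently, every) $\lambda>0$, the manifold $\left(M,g\right)$ is stochastically complete if and only if the only bounded function $0\le v\in C^{2}\left(M\right)$ solving $\Delta v=\lambda v$ on $M$ is $v\equiv0$; moreover, stochastic \emph{incompleteness} already follows from the existence of a bounded, non-negative, non-trivial \emph{subsolution}, i.e.\ $0\le w\in C^{0}\left(M\right)$, $w\not\equiv0$, with $\Delta w\ge\lambda w$ in the weak sense. This is classical: analytically one takes the Laplace transform in $t$ of the heat semigroup (recall $G=\int_{0}^{\infty}p_{t}\,dt$) and uses $L^{\infty}$-uniqueness estimates for $\Delta-\lambda$; probabilistically the witnessing function is $v\left(x\right)=\mathbb{E}_{x}\bigl[e^{-\lambda\zeta}\bigr]$, $\zeta$ being the explosion time. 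We take it for granted and refer to \cite{grigoryan-Uzvestiya} and the references therein.

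\emph{Weak maximum principle $\Rightarrow$ stochastic completeness.} Fix $\lambda=1$ and let $0\le v\in C^{2}\left(M\right)$ be bounded with $\Delta v=v$. Since $v^{\ast}:=\sup_{M}v<+\infty$, the weak maximum principle yields a sequence $\left\{x_{k}\right\}$ with $v\left(x_{k}\right)>v^{\ast}-1/k$ and $\Delta v\left(x_{k}\right)<1/k$; but $\Delta v\left(x_{k}\right)=v\left(x_{k}\right)>v^{\ast}-1/k$, so $v^{\ast}<2/k$ for every $k$, whence $v^{\ast}\le0$ and $v\equiv0$. By the characterization, $M$ is stochastically complete.

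\emph{Stochastic completeness $\Rightarrow$ weak maximum principle.} We argue by contradiction. If the weak maximum principle fails for some $u\in C^{2}\left(M\right)$ with $u^{\ast}:=\sup_{M}u<+\infty$, then, negating the two-sided condition in the statement, there is $\varepsilon>0$ (one may take $\varepsilon=1/k_{0}$ for a suitable $k_{0}$) such that $\Delta u\ge\varepsilon$ on the open set $\Omega:=\left\{u>u^{\ast}-\varepsilon\right\}$. Put $\lambda:=2$ and
\[
w:=\max\bigl\{\,u-\bigl(u^{\ast}-\tfrac{\varepsilon}{2}\bigr),\,0\,\bigr\}.
\]
Since $u\le u^{\ast}$, we have $0\le w\le\varepsilon/2$, and $w>0$ exactly on $A:=\left\{u>u^{\ast}-\varepsilon/2\right\}$, which is non-empty because $u^{\ast}=\sup_{M}u$; in particular $w\not\equiv0$. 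Also $\overline{A}\subset\left\{u\ge u^{\ast}-\varepsilon/2\right\}\subset\Omega$, so $\Omega$ is an open set containing $\overline{A}$ on which $\Delta u\ge\varepsilon$. On $A$ one has $\Delta w=\Delta u\ge\varepsilon\ge 2w=\lambda w$; on $M\setminus\overline{A}$ one has $w\equiv0=\lambda w$; and near any point of $\partial A$ — which lies in $\Omega$ — the smooth function $u-\bigl(u^{\ast}-\varepsilon/2\bigr)$ is a classical $\lambda$-subsolution, because there $\Delta u\ge\varepsilon\ge\lambda\bigl(u-(u^{\ast}-\varepsilon/2)\bigr)$ (using again $u\le u^{\ast}$), so $w$, being the maximum of this and of the $\lambda$-subsolution $0$, is a weak $\lambda$-subsolution across $\partial A$ as well. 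Hence $w$ is a bounded, non-negative, non-trivial weak subsolution of $\Delta w\ge\lambda w$ on all of $M$, so by the characterization $M$ is stochastically incomplete — a contradiction. (If one wishes to use only the ``solution'' form of the characterization, solve $\Delta h_{n}=\lambda h_{n}$ on an exhaustion $\left\{\Omega_{n}\right\}$ of $M$ by relatively compact smooth open sets, with $h_{n}=\sup_{M}w$ on $\partial\Omega_{n}$; the maximum principle for $\Delta-\lambda$ gives $0\le h_{n+1}\le h_{n}\le\sup_{M}w$ and, by comparison with the subsolution $w$, also $w\le h_{n}$, so the monotone limit $h:=\lim_{n}h_{n}$ is a bounded, non-negative solution of $\Delta h=\lambda h$ with $h\ge w\not\equiv0$.)

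\emph{Main obstacle.} The substantive ingredient is the Khas'minskii characterization linking stochastic completeness to $L^{\infty}$-uniqueness for $\Delta-\lambda$; granted that, the rest is elementary. Within the elementary part the two points needing care are: (a) writing down correctly the negation of the two-sided condition (i)--(ii), the key observation being that the two constants there may be taken equal and that the resulting inequality $\Delta u\ge\varepsilon$ holds on the \emph{open} superlevel set $\Omega$; and (b) the gluing step, i.e.\ verifying that the capped function $w$ remains a subsolution \emph{across} $\partial A$ in the weak (or viscosity) sense — this rests on the principle that a maximum of subsolutions is a subsolution, together with the inclusion $\overline{A}\subset\Omega$, which is precisely what makes the bound $\Delta u\ge\varepsilon$ available near $\partial A$. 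The optional passage from a bounded subsolution to a bounded solution is routine, via the Dirichlet problem on a compact exhaustion and the comparison principle for $\Delta-\lambda$ with $\lambda>0$.
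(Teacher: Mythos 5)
The paper does not actually prove Theorem \ref{th_stoch&maxprinciple}; it quotes it from \cite{PRS-PAMS} and \cite{PRS-Memoirs}, and your argument is correct and essentially reproduces the proof given there. Both directions are routed through the classical Khas'minskii--Grigor'yan characterization of stochastic completeness via bounded non-negative solutions (equivalently, subsolutions) of $\Delta v=\lambda v$, with the capped function $\max\{u-(u^{\ast}-\varepsilon/2),0\}$ serving as the witnessing bounded subsolution when the maximum principle at infinity fails — exactly the mechanism of the cited reference.
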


\section{An example} \label{section_example}

This section is devoted to show that, in general, an $L^{1}$-Liouville
manifold may be stochastically incomplete. This answers in the negative Problem (a) stated in the previous section. To this end, we construct an
explicit example in two steps.

\medskip

\noindent\textbf{First Step.} Recall that the connected sum $M_{1}\#M_{2}$ of
equidimensional Riemannian manifolds is stochastically incomplete provided
either $M_{1}$ or $M_{2}$ are stochastically incomplete. See \cite[Lemma 3.1]{BessaBar}.
This is a very special case of the following general fact which follows quite easily using
the viewpoint of Theorem \ref{th_stoch&maxprinciple}.

\begin{proposition}
\label{prop_stochends}Let $\left(  M,g\right)  $ be a complete manifold and
let $E_{1},...,E_{k}$ be the ends of $M$ with respect to any smooth, compact
domain $\Omega\subset M$. Then $M$ is stochastically complete if and only if,
for every $j=1,...,k$, either of the following conditions is verified:

\begin{enumerate}
\item[(i)] There exists a compact domain $D_{j}$ together with a
diffeomorphism $f_{j}:\partial D_{j}\rightarrow\partial E_{j}$ such that the
gluing $M_{j}=D_{j}\cup_{f_{j}}E_{j}$ is a stochastically complete manifold
(without boundary).

\item[(ii)] The Riemannian double $\mathcal{D}\left(  E_{j}\right)  $ is a
stochastically complete manifold (without boundary).
\end{enumerate}
\end{proposition}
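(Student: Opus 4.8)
My plan is to exploit the maximum principle characterization of stochastic completeness (Theorem \ref{th_stoch&maxprinciple}) together with a localization of the test sequence $\{x_k\}$ on each end. The key heuristic is that stochastic completeness is a property ``at infinity'', so only the ends of $M$ matter, and the ends of $M$, of the glued manifolds $M_j = D_j \cup_{f_j} E_j$, and of the doubles $\mathcal{D}(E_j)$ all ``look the same'' far away.

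\emph{The implication (i) (or (ii)) for all $j$ $\Rightarrow$ $M$ stochastically complete.} Suppose $u \in C^2(M)$ with $u^\ast = \sup_M u < +\infty$. We want a sequence along which (i) and (ii) of Theorem \ref{th_stoch&maxprinciple} hold. If $u$ attains values arbitrarily close to $u^\ast$ on the compact core $\Omega \cup D_1 \cup \dots \cup D_k$ — or more simply if the sup is attained on a compact set — one invokes the ordinary finite-dimensional maximum principle (or just extracts a convergent subsequence) to produce the sequence, since at an interior maximum $\Delta u \le 0$. Otherwise, there is an end $E_j$ and a sequence escaping to infinity in $E_j$ along which $u \to u^\ast$. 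Now transplant the problem to $M_j$ (in case (i)) or $\mathcal{D}(E_j)$ (in case (ii)): one builds a $C^2$ function $\tilde u$ on $M_j$ that agrees with $u$ outside a compact neighborhood of $\partial E_j$ (using a cutoff to interpolate with a constant $\le u^\ast$ on $D_j$, chosen so that $\sup_{M_j}\tilde u = u^\ast$ is still only approached at infinity in the $E_j$-end). Since $M_j$ is stochastically complete, Theorem \ref{th_stoch&maxprinciple} gives a sequence $\{x_k\}$ in $M_j$ with $\tilde u(x_k) > u^\ast - 1/k$ and $\Delta \tilde u(x_k) < 1/k$; because $\tilde u \le u^\ast$ and the cutoff region is compact while $\tilde u < u^\ast$ there for $k$ large, the $x_k$ eventually lie in $E_j \subset M$ where $\tilde u = u$, so $\{x_k\}$ works for $u$ on $M$. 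The case (ii) is identical, noting $E_j$ sits isometrically inside its double.

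\emph{The converse: $M$ stochastically complete $\Rightarrow$ for each $j$, (i) and (ii) hold.} Actually one shows the contrapositive is not needed in full; it suffices to observe that if $M$ is stochastically complete then each end $E_j$ is ``stochastically complete relative to its boundary'', and this property passes to \emph{any} manifold obtained by capping $E_j$ off with a compact piece, in particular to $M_j$ and to $\mathcal{D}(E_j)$. Concretely: given $w \in C^2(M_j)$ with $\sup w < +\infty$, restrict attention to the end $E_j$; extend $w|_{E_j}$ (after a harmless modification near $\partial E_j$) to a bounded $C^2$ function $\bar w$ on all of $M$ by using the other ends and the core as ``room'' to patch, arranged so that the sup of $\bar w$ over $M$ is again approached only inside $E_j$ (shrinking the values elsewhere if necessary). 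Stochastic completeness of $M$ then yields the required sequence in $E_j$, hence in $M_j$. The same patching argument, run with $\mathcal{D}(E_j)$ in place of $M_j$, handles (ii).

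\emph{Main obstacle.} The delicate point is the patching/cutoff construction: one must modify the given function near the compact separating hypersurface $\partial E_j$ to obtain a $C^2$ function on the larger manifold \emph{without} creating a new near-maximum in the compact region, while keeping $C^2$ regularity across the gluing. This is where one uses that $\partial E_j$ is compact and smooth and that, by subtracting a small constant and multiplying by a cutoff, one can always drag the values on the compact core strictly below $u^\ast$; the escaping sequence is then forced out to the relevant end. Everything else is a bookkeeping application of Theorem \ref{th_stoch&maxprinciple}.
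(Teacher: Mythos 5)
Your argument is correct and takes precisely the route the paper has in mind: the paper offers no written proof, merely noting that the proposition ``follows quite easily'' from the weak maximum principle characterization of Theorem \ref{th_stoch&maxprinciple}, and your localization--cutoff scheme (dichotomy between an attained interior maximum and a maximizing sequence escaping along a single end, then transplanting the function across the compact gluing region after dragging its values there strictly below the supremum) is exactly that argument, carried out in both directions. The only point worth recording explicitly is the observation that the supremum over the compact collar of $\partial E_{j}$ is strictly less than $u^{\ast}$ whenever the supremum is not attained, which is what makes the interpolation harmless; you do address this.
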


In particular, consider the $2$-dimensional model manifolds%
\[
M_{\sigma_{j}}^{2}=\left(  [0,+\infty)\times\mathbb{S}^{1},dt^{2}+\sigma
_{j}\left(  t\right)  ^{2}d\theta^{2}\right)  ,
\]
$j=1,2,$ where we require%
\[
\int^{+\infty}\sigma_{1}\left(  t\right)  dt=+\infty
\]
and%
\[
\int^{+\infty}\frac{\int_{0}^{r}\sigma_{2}\left(  t\right)  dt}{\sigma
_{2}\left(  r\right)  }dr<+\infty.
\]
The first condition means that $M_{\sigma_{1}}^{2}$ has infinite volume. On
the other hand, by a well known characterization, \cite[Prop. 3.2]{grigoryan-BAMS}, the second condition is
equivalent to requiring that $M_{\sigma_{2}}^{2}$ be stochastically incomplete.
Let%
\[
M=M_{\sigma_{1}}^{2}\#M_{\sigma_{2}}^{2}%
\]
where the connected sum is performed using embedded disks $D$ centered at the
poles of the manifolds. By the above considerations, $\left(  M,g\right)  $ is
stochastically incomplete. In particular, $M$ is non-parabolic, therefore,
it possesses a Green function $G<+\infty$.

\medskip

\noindent\textbf{Second Step. }We now perform a conformal change of the metric
$g$. We define%
\[
\tilde g=\lambda^{2}g,
\]
where $\lambda>0$ is any smooth function with the following properties:

\begin{enumerate}
\item[(a)] Outside a neighborhood of $M_{\sigma_{1}}^{2}\backslash D\subset
M$, $\lambda\equiv1.$
\item[]
\item[(b)] Outside a neighborhood of $M_{\sigma_{2}}^{2}\backslash D\subset
M$, $\lambda$ satisfies%
\[
\lambda\left(  t,\theta\right)  \geq\frac{1}{\sqrt{\min_{[t_{1},t]\times
\mathbb{S}^{1}}G_{x_{0}}\left(  x\right)  }},
\]
where $x_{0}$ is any point in $M_{\sigma_{2}}^{2}\backslash D\subset M$ and,
without loss of generality, $[t_{1},+\infty)\times\mathbb{S}^{1}\subset
M_{\sigma_{1}}^{2}$ has the original metric $dt^{2}+\sigma_{1}^{2}\left(
t\right)  d\theta^{2}$.
\end{enumerate}

\noindent\textbf{Conclusion.} We claim that $\tilde M=\left(  M,\tilde g\right)  $ is stochastically incomplete and possesses the $L^{1}$-Liouville
property. Indeed, according to (a), and using Proposition \ref{prop_stochends}%
, we see that $\tilde M$ is stochastically incomplete. In particular,
$\tilde M$ is non-parabolic. Actually, since%
\[
\Delta_{\tilde g}=\frac{1}{\lambda^{2}}\Delta_{g},
\]
it follows that the Green function $\tilde G$ of $\tilde M$ satisfies%
\[
\tilde G=G.
\]
Therefore,%
\begin{align*}
\int_{\tilde M}\tilde G\left(  x_{0},y\right)  d\tilde v\left(
y\right)   &  =\int_{\tilde M}G\left(  x_{0},y\right)  \lambda^{2}\left(
y\right)  dv\left(  y\right) \\
&  \geq\lim_{t\rightarrow+\infty}\int_{[t_{1},t]\times\mathbb{S}^{1}\subset
M_{\sigma_{1}}^{2}}G\left(  x_{0},y\right)  \lambda^{2}\left(  y\right)
dv\left(  y\right) \\
&  \geq C\int_{t_{1}}^{+\infty}\sigma_{1}\left(  t\right)  dt\\
&  =+\infty,
\end{align*}
and by Theorem \ref{th_green} we conclude that the Riemannian manifold
$\tilde M$ is $L^{1}$-Liouville.

Actually, a variation of the above construction allows us to produce an example of a stochastically incomplete  $L^1$-Liouville manifold with only one end.
As above, we start with a $2$-dimensional stochastically incomplete model
\[
M_{\sigma}^{2}=\left(  [0,+\infty)\times\mathbb{S}^{1},g= dr^{2}+\sigma\left(  r\right)  ^{2}d\theta^{2}\right),
\]
with $\sigma (r)$ increasing and diverging to infinity at infinity, and (radial) Green's function with pole at $o$,  $G(o,x)= G(o,r(x))$, and perform the conformal change
of metric
\[
\tilde g = \lambda g
\]
with a conformality factor $\lambda(x)\geq 1$ such that
$\lambda (x)=1$ if $x=re^{i\theta}$  with  $-\pi/2\leq \theta \leq \pi/2$ and $\lambda (x)\geq G(o, r)^{-1/2}$ if $x=re^{i\theta}$ with
$r>1$ and $3\pi/4\leq \theta \leq 5\pi/4$. Denoting as above with a tilde the quantities relative to the conformal metric $\tilde g$ and, using again the fact that $\tilde G(o,x)= G(o,x)$  and that
$d\tilde v = \lambda dv= \lambda\, \sigma dr d\theta$ we see that
\begin{equation*}
\begin{split}
\int_{\tilde M} \tilde G(o, x) d\tilde v &\geq \int_{[1,\infty)\times [3\pi/4,5\pi/4]} G\left(o,re^{i\theta}\right) \lambda\left(re^{i\theta}\right) \sigma (r)drd\theta \\
& \geq  \pi/2 \int_1^\infty \sigma(r) dr = +\infty,
\end{split}
\end{equation*}
and $\tilde M$ is $L^1$-Liouville. On the other hand, let
\[
v_o(r)=\int_0^r \sigma(t)^{-1} \int_0^t \sigma (s)ds\, dt,
\]
and let $v\left(re^{i\theta}\right)=v_o(r)\cos(\theta)$. Then $v$ tends to its supremum along the ray $re^{i0}$ and, using $\tilde \Delta = \lambda ^{-2}\Delta$ and $\Delta v_o=1$
we deduce that in the region where $-\pi/4\leq \theta\leq \pi/4$ and $\sigma(r)^2>2 \sup v_o$ we have
\begin{equation*}
\tilde \Delta v\left(re^{i\theta}\right)  =
\frac 1 {\lambda (re^{i\theta})^{2}}
\left( \Delta v_o(r) \cos(\theta) -  \frac 1 {\sigma(r)^{2}} v_o(r)\cos (\theta)\right) \geq  \frac{\sqrt 2} 4.
\end{equation*}
Thus $v$ does not satisfy the maximum principle at infinity and $\tilde M$ is not stochastically complete.

\par

The examples above stress the fact that equivalence between stochastic completeness and the validity of the
$L^1$-Liouville property depends very much on the rotational invariance of the models. In the presence of a strong anisotropy, it is  possible that Brownian motion may explode in finite time in certain directions and yet
the $L^1$-Liouville property holds, due to the fact that the Green's kernel is big enough in other regions (or ends of the manifold).

The first example constructed in Section \ref{section_example} fits very well in this order of ideas. In fact, inspection of that example shows that the stochastically incomplete end remains essentially untouched whereas the background metric is conformally modified only on the end responsible for the validity of the $L^{1}$-Liouville property.

While, the second example shows that the $L^1$-Liouville property does not imply even a weak form of
stochastic completeness where it is required that at least one of the ends of the manifold is stochastically
complete.

\section{Mean exit time and the $L^{1}$-Liouville property} \label{section_meanexit}

As remarked above, stochastic completeness and the $L^1$-Liouville property
are equivalent on models, but, in general, $L^{1}$-Liouville manifolds may be  stochastically
incomplete. We are thus naturally led to investigate general geometric conditions that
guarantee that a a stochastically incomplete manifold is not
$L^1$-Liouville. In this section we will focus our attention
on curvature conditions both of intrinsic and of extrinsic nature. In both
cases we shall use the notion of \textquotedblleft global mean exit
time\textquotedblright\ that we are going to introduce.\medskip

Let $\left(  M,g\right)  $ be a complete Riemannian manifold and let $o\in M$
be a fixed reference point. The mean exit time of the Brownian motion from the
ball $B_{R}\left(  o\right)  $ \ is defined as the (positive) solution of the
Dirichlet problem%
\[
\left\{
\begin{array}
[c]{ll}%
\Delta E_{R}=-1 & \text{on }B_{R}\left(  o\right) \\
E_{R}=0 & \text{on }\partial B_{R}\left(  o\right)  .
\end{array}
\right.
\]
Note that $E_{R}$ is a smooth function on $B_{R}\left(  o\right)  $. Moreover,
if $G_{R}\left(  x,y\right)  $ denotes the Dirichlet Green function of
$B_{R}\left(  o\right)  $, then, we have the representation formula%
\[
E_{R}\left(  x\right)  =\int_{B_{R}\left(  o\right)  }G_{R}\left(  x,y\right)
dv\left(  y\right)  .
\]
Since $G_{R}\left(  x,y\right)  \nearrow G\left(  x,y\right)  $ as
$R\nearrow+\infty$, by monotone convergence we deduce that%
\[
E_{R}\left(  x\right)  \nearrow E\left(  x\right)  =\int_{M}G\left(
x,y\right)  dv\left(  y\right)  .
\]
We call $E\left(  x\right)  $ the \textit{global mean exit time of $M$}. With this
terminology and notation, $M$ is not $L^{1}$-Liouville if and only if $E$ is a
genuine (say, finite) function. In particular, on a stochastically complete manifold,
the global mean exit time must be infinite. On the other hand, we point out that, according to Section \ref{section_example}, there exist stochastically incomplete manifolds with infinite global mean exit time, thus showing that, in general, the global mean exit time does not carry enough information on the explosion of the Brownian motion because of the possible presence of direction along which explosion can occur in finite time.

\subsection{Intrinsic curvature restrictions}

We shall prove the following

\begin{theorem}
\label{th_curv&l1}Let $\left(  M,g\right)  $ be a complete Riemannian manifold
of dimension $m$ with a pole $o$. Assume that the distance
function $r\left(  x\right)  =d_{M}\left(  x,o\right)  $ satisfies%
\begin{equation}
\Delta r\geq\left(  m-1\right)  \frac{\sigma^{\prime}}{\sigma}\text{, on
}M\label{comp-lap}%
\end{equation}
where $\sigma:[0,+\infty)\rightarrow\lbrack0,+\infty)$ is the warping function
of the $m$-dimensional model manifold $M_{\sigma}^{m}.$ If $M_{\sigma}^{m}$ is
not $L^1$-Liouville (equivalently, stochastically incomplete) then $M$ is not
$L^{1}$-Liouville.
\end{theorem}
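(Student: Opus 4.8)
The plan is to compare the global mean exit time $E$ of $M$ with the global mean exit time $E_\sigma$ of the model $M_\sigma^m$, via the Laplacian comparison hypothesis \eqref{comp-lap}, and to conclude using the criterion established after Theorem \ref{th_green}: $M$ fails the $L^1$-Liouville property precisely when $E<+\infty$. Since $M_\sigma^m$ is not $L^1$-Liouville, its model mean exit time $E_\sigma(r)$ is a finite, radial function; explicitly, solving $\Delta_\sigma E_\sigma = -1$ with $E_\sigma$ bounded gives $E_\sigma$ as an iterated integral of $\sigma^{m-1}$, whose finiteness is exactly the stochastic incompleteness condition for the model. The target is therefore to build, from $E_\sigma$, a bounded supersolution of $\Delta u \le -1$ on all of $M$, or more precisely to show $E_R(x) \le E_\sigma(r(x)) + \text{const}$ on each ball $B_R(o)$ uniformly in $R$, and then let $R\to+\infty$.

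The key steps, in order. First, I would set $w(x) = E_\sigma(r(x))$, a radial function on $M$ (well-defined since $o$ is a pole, so $r$ is smooth on $M\setminus\{o\}$ and $w$ extends smoothly across $o$ because $E_\sigma$ is an even function of $r$ to leading order). Compute $\Delta w = E_\sigma''(r)\,|\nabla r|^2 + E_\sigma'(r)\,\Delta r = E_\sigma''(r) + E_\sigma'(r)\,\Delta r$, using $|\nabla r|=1$. Second, observe that $E_\sigma' < 0$ on $(0,+\infty)$: indeed $(\sigma^{m-1} E_\sigma')' = \sigma^{m-1}\Delta_\sigma E_\sigma = -\sigma^{m-1} < 0$, and integrating from $0$ gives $\sigma^{m-1}(r) E_\sigma'(r) = -\int_0^r \sigma^{m-1}(t)\,dt < 0$. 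Hence, because $E_\sigma'\le 0$, the comparison $\Delta r \ge (m-1)\sigma'/\sigma$ yields
\[
\Delta w = E_\sigma''(r) + E_\sigma'(r)\,\Delta r \le E_\sigma''(r) + (m-1)\frac{\sigma'}{\sigma}E_\sigma'(r) = \Delta_\sigma E_\sigma = -1
\]
on $M\setminus\{o\}$. Third, I would check that this distributional inequality holds across the pole as well (the only delicate point being the behavior of $\Delta r$ near $o$, where $\Delta r \sim (m-1)/r$ and the model model-comparison is an equality in the limit, so no concentrated positive mass appears at $o$); alternatively one restricts to $B_R(o)\setminus B_\varepsilon(o)$ and handles a thin ball around $o$ separately using that $w$ is smooth and bounded there.

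Fourth, with $w$ a bounded (by $\sup E_\sigma < +\infty$) function satisfying $\Delta w \le -1$ weakly on $M$, I would run the standard maximum-principle comparison on $B_R(o)$: the function $E_R - w$ satisfies $\Delta(E_R - w) = -1 - \Delta w \ge 0$, so $E_R - w$ is subharmonic on $B_R(o)$, and on $\partial B_R(o)$ we have $E_R - w = -w \le 0$ since $w \ge 0$ (because $E_\sigma \ge 0$). By the maximum principle $E_R \le w$ on $B_R(o)$, uniformly in $R$. Letting $R\to+\infty$ and using $E_R \nearrow E$ gives $E(x) \le w(x) = E_\sigma(r(x)) \le \sup E_\sigma < +\infty$, so $E$ is finite and $M$ is not $L^1$-Liouville.

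The main obstacle I anticipate is the behavior at the pole $o$: verifying that $w = E_\sigma\circ r$ is genuinely a weak supersolution of $\Delta w \le -1$ on all of $M$, rather than just on $M\setminus\{o\}$, requires knowing that $\Delta r$ does not carry a negative singular part at $o$ that would spoil the inequality — in fact near $o$ one has $\Delta r = (m-1)/r + o(1/r)$ while $(m-1)\sigma'/\sigma = (m-1)/r + o(1/r)$, so the comparison \eqref{comp-lap} is asymptotically sharp there and the argument goes through, but this needs to be stated carefully (e.g.\ by an exhaustion argument on annuli, or by noting $w$ is $C^2$ near $o$ and $\Delta w(o) = m E_\sigma''(0) = -1$ directly from the model ODE). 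A secondary, purely technical point is justifying the smoothness and evenness of $E_\sigma$ near $r=0$ so that $w$ is at least Lipschitz — actually $C^1$ — across the pole; this follows from the explicit integral formula for $E_\sigma$ together with $\sigma(r)\sim r$ as $r\to 0^+$.
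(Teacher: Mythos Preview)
Your proposal is correct and follows essentially the same route as the paper: transplant the model mean exit time to $M$ via the distance function, use the Laplacian comparison together with the sign $E_\sigma'<0$ to obtain $\Delta(E_\sigma\circ r)\le -1$, and compare with $E_R$ on balls by the maximum principle. The only cosmetic difference is that the paper works with the truncated functions $F_R(r)=\int_r^R \sigma^{1-m}\int_0^t\sigma^{m-1}$, which vanish on $\partial B_R(o)$ and increase to $F=E_\sigma$, whereas you compare directly against the global $E_\sigma\circ r$ using its nonnegativity on the boundary; both yield $E\le E_\sigma\circ r<+\infty$.
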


\begin{remark}
\rm{By standard comparison arguments, condition (\ref{comp-lap}) follows from the
radial sectional curvature condition%
\[
Sec_{rad}\left(  x\right)  \leq-\frac{\sigma^{\prime\prime}}{\sigma}\left(
r\left(  x\right)  \right)  .
\]
Note that the above result allows us to recover the already noted
equivalence of stochastic completeness and $L^1$-Liouville property
of model manifolds.}
\end{remark}

\begin{proof}
Define%
\begin{equation}
\label{F_R}
F_{R}\left(  r\right)  =\int_{r}^{R}\frac{\int_{0}^{t}\sigma^{m-1}\left(
s\right)  ds}{\sigma^{m-1}\left(  t\right)  }dt
\end{equation}
and%
\begin{equation}
\label{F}
F\left(  r\right)  =\int_{r}^{+\infty}\frac{\int_{0}^{t}\sigma^{m-1}\left(
s\right)  ds}{\sigma^{m-1}\left(  t\right)  }dt.
\end{equation}
Since, by assumption, $M_{\sigma}^{m}$ is stochastically incomplete, we have%
\[
F\left(  r\right)  <+\infty,\text{ }\forall r.
\]
Direct computations  show that the transplanted function $F_{R}\left(
r\left(  x\right)  \right)  $ satisfies%
\[
\left\{
\begin{array}
[c]{llll}%
\Delta F_{R}&\leq & -1 & \text{on }B_{R}\left(  o\right)  \\
F_{R}&=&0 & \text{on }\partial B_{R}\left(  o\right)  .
\end{array}
\right.
\]
Therefore, by comparison on bounded domains,%
\[
E_{R}\left(  x\right)  \leq F_{R}\left(  r\left(  x\right)  \right)  \text{ on
}B_{R},
\]
and letting $R\rightarrow+\infty$ we conclude%
\[
E\left(  x\right)  \leq F\left(  r\left(  x\right)  \right)  .
\]
This proves that $M$ is not $L^{1}$-Liouville.
\end{proof}

\subsection{Minimal submanifolds}

This subsection aims to showing that
the $L^1$-Liouville property of a proper minimal submanifold $\Sigma$
of manifold with a pole $N$ depends on the curvature of the
ambient space. In particular, in the case where  $N$ is a model with warping function
$\sigma$, if the $m$-dimensional model manifold $M^m_\sigma$ is not
$L^1$-Liouville, and $\sigma$ satisfies a technical condition, then
$\Sigma$ is not $L^1$-Liouville.
As alluded to above, we shall use a global mean exit time comparison
argument which extends a previous result by S. Markovsen, \cite{markvorsen}
(see also \cite{BeMo-BLMS}).

\begin{theorem} \label{th_minimal}
Let $f:\Sigma\hookrightarrow N$ be an $m$-dimensional properly immersed minimal
submanifold into an $n$-dimensional Riemannian  manifold $N$. Assume that
the sectional curvature of $N$
satisfies
\[
K^N\leq -G(\rho(y))
\]
where $G$ is a smooth  even function on $\mathbb{R}$ and
$\rho\left(  y\right)  =d_N\left(  y,p\right)$ denotes the
Riemannian distance function from a fixed point $p\in N$. Let $\sigma$
be the solution of the initial value problem
\[
\begin{cases}
\sigma''= G\sigma &\\
\sigma(0)=0, \, \sigma'(0)=1&
\end{cases}
\]
and assume that
\[
\sigma^{\prime}\geq0
\]
and%
\begin{equation}
\label{sigma_condition}
\frac{\sigma^{m-1}}{\int_{0}^{t}\sigma^{m-1}}\leq m\,\frac{\sigma^{\prime}}{\sigma},
\end{equation}
on $(0,+\infty)$. Then, for every extrinsic ball $B_{R}^{N}$ centered at  $p$ in
$N^n$ with radius $R<\mathrm{inj}_{N}(p)$, the mean exit time
$E_{f^{-1}(B_{R}^N)}(x)$ satisfies
\[
E_{f^{-1}(B_{R}^{N})}\left(  x\right)  \leq F_{R}\circ\rho\circ f\left(
x\right)  ,
\]
where $F_R$ is defined in \eqref{F_R}.
In particular, if $\mathrm{inj}_N(p)=+\infty$, and
\begin{equation}
\label{F_integrability}
t\to\frac{\int_{0}^{t}\sigma^{m-1}}{\sigma^{m-1}}\in L^1(+\infty)
\end{equation}
then $\Sigma$ is not $L^{1}$-Liouville.
\end{theorem}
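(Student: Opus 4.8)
The plan is to mimic the proof of Theorem \ref{th_curv&l1}: produce on each extrinsic ball $f^{-1}(B_R^N)$ a radial supersolution of the mean exit time equation coming from the model $M_\sigma^m$, and then let $R\to+\infty$. Concretely, I would set $G_R=F_R\circ\rho\circ f$, where $F_R$ is as in \eqref{F_R}, and try to show that $\Delta^\Sigma G_R\leq -1$ on $f^{-1}(B_R^N)$ and $G_R=0$ on its boundary. The boundary condition is immediate since $F_R(R)=0$. The comparison on bounded domains (as used for $E_R\leq F_R$ in Theorem \ref{th_curv&l1}) then gives $E_{f^{-1}(B_R^N)}\leq F_R\circ\rho\circ f$, and since $f$ is proper the sets $f^{-1}(B_R^N)$ exhaust $\Sigma$, so letting $R\to+\infty$ and invoking the integrability hypothesis \eqref{F_integrability} yields that the global mean exit time $E$ of $\Sigma$ is finite. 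By the discussion in Section \ref{section_meanexit} ($M$ is not $L^1$-Liouville iff $E$ is finite), $\Sigma$ is not $L^1$-Liouville.

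The heart of the matter is the differential inequality $\Delta^\Sigma(F_R\circ\rho\circ f)\leq -1$. I would compute this via the chain rule for the Laplacian of a composition on a submanifold: for a smooth function $\phi$ on $N$ and the minimal immersion $f$,
\[
\Delta^\Sigma(\phi\circ f)=\sum_{i=1}^m\bigl(\mathrm{Hess}^N\phi\bigr)(e_i,e_i)+\langle\nabla^N\phi,\vec H\rangle\circ f,
\]
where $\{e_i\}$ is an orthonormal frame for $T\Sigma$ and $\vec H$ is the mean curvature vector, which vanishes by minimality. Applying this with $\phi=F_R\circ\rho$ and writing $\mathrm{Hess}^N(F_R\circ\rho)=F_R''(\rho)\,d\rho\otimes d\rho+F_R'(\rho)\,\mathrm{Hess}^N\rho$, and using that $F_R'\leq0$ (since the integrand in \eqref{F_R} is nonnegative, $F_R$ is nonincreasing) together with the Hessian comparison theorem, which under $K^N\leq -G(\rho)$ gives $\mathrm{Hess}^N\rho\geq(\sigma'/\sigma)(\rho)\{\langle\cdot,\cdot\rangle-d\rho\otimes d\rho\}$ inside the injectivity radius, one bounds $\Delta^\Sigma(F_R\circ\rho\circ f)$ from above. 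Denoting $s=|\nabla^\Sigma(\rho\circ f)|^2\in[0,1]$, the resulting estimate reads
\[
\Delta^\Sigma(F_R\circ\rho\circ f)\leq F_R''(\rho)\,s+F_R'(\rho)\frac{\sigma'}{\sigma}(\rho)\,(m-s).
\]
Now one uses the ODE satisfied by $F_R$ on $M_\sigma^m$, namely $F_R''+(m-1)(\sigma'/\sigma)F_R'=-1$, equivalently $F_R'(t)=-\sigma^{1-m}(t)\int_0^t\sigma^{m-1}$, so that $F_R''=-1-(m-1)(\sigma'/\sigma)F_R'$. Substituting, the right-hand side becomes $-s+F_R'(\sigma'/\sigma)\bigl(m-s-(m-1)s\bigr)=-s+m(1-s)F_R'(\sigma'/\sigma)$. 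Since $F_R'\leq0$ we want this $\leq-1$; the worst case is balancing the two negative contributions, and here condition \eqref{sigma_condition} enters: it says $-F_R'(\sigma'/\sigma)\geq (\sigma^{1-m}\int_0^t\sigma^{m-1})\cdot(\sigma^{m-1}/(m\int_0^t\sigma^{m-1}))=1/m$, i.e. $m(-F_R')(\sigma'/\sigma)\geq1$. Thus $-s+m(1-s)F_R'(\sigma'/\sigma)\leq -s-(1-s)=-1$, as required.

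The main obstacle I anticipate is exactly the bookkeeping in the Hessian-comparison step and making sure the inequality goes the right way for \emph{all} values of $s\in[0,1]$ — one has to be careful that $F_R'\leq0$ and $m-s\geq0$ so that the comparison $\mathrm{Hess}^N\rho\geq(\sigma'/\sigma)\{g-d\rho^2\}$ is used with the correct sign after multiplying by $F_R'$, and that the ODE substitution together with \eqref{sigma_condition} is applied at the point $t=\rho(f(x))$, which lies in $(0,R)$ where everything is smooth (the radial function $\rho$ is smooth away from $p$ and within the injectivity radius, and the set where $\rho\circ f=0$ is where $\sigma(t)\to0$, handled as in the model case). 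A secondary point requiring a remark is that \eqref{F_integrability} is what guarantees $F=F_\infty$ is finite, so that $\lim_{R\to\infty}F_R\circ\rho\circ f=F\circ\rho\circ f<+\infty$ pointwise and the monotone limit $E=\lim E_{f^{-1}(B_R^N)}\leq F\circ\rho\circ f$ is genuinely finite, i.e. $\Sigma$ is not $L^1$-Liouville by Theorem \ref{th_green}.
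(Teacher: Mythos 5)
Your proposal is correct and follows essentially the same route as the paper's proof: the trace chain rule for $\Delta^\Sigma(\phi\circ f)$ with $\vec H=0$, the Hessian comparison $\mathrm{Hess}^N\rho\geq(\sigma'/\sigma)\{\langle\cdot,\cdot\rangle-d\rho\otimes d\rho\}$, the substitution $F_R''=-1-(m-1)(\sigma'/\sigma)F_R'$, and the use of \eqref{sigma_condition} in the form $m(-F_R')(\sigma'/\sigma)\geq 1$ to force the bound $-s-(1-s)=-1$ uniformly in $s=|\nabla^\Sigma(\rho\circ f)|^2\in[0,1]$. The only cosmetic difference is that you package the tangential projection into the single scalar $s$ where the paper writes out the sums over an orthonormal basis of $T_{f(x)}N$; the comparison on $f^{-1}(B_R^N)$ and the exhaustion argument as $R\to+\infty$ are identical.
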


\begin{proof}
S. Markvorsen \cite{markvorsen}, considered the case of constant curvature reference spaces, which correspond
to the choices $$\sigma(t)=t,\,\,\, \sigma(t)= k^{-1}\sin(kt), \,\,\, \sigma(t)={k}^{-1}\sinh(kt),$$ with $k>0$.
Actually, it is possible to
extend Markvorsen arguments to the more general setting of Theorem
\ref{th_minimal} by using the function
\[
\overline{F}_{R}\left(  t\right)  =F_{R}\circ \mathrm{i}^{-1}\left(  t\right)  ,
\]
with%
\[
\mathrm{i}\left(  t\right)  =\int_{0}^{t}\sigma\left(  s\right)  ds.
\]
We are going to exhibit a more straightforward argument which avoids
the use of the auxiliary function $\overline{F}_{R}$.

Recall that if $f:M\to N$ is an isometric immersion  and $\varphi:N\to
\mathbb{R}$ and $F:\mathbb{R}\to \mathbb{R}$ are smooth, then for
every $X\in T_xM$ we have
\[
\begin{split}
\mathrm{Hess}(F\circ\varphi\circ f) (X,X) &=
F''(\varphi(f(x))) \langle \nabla^N \varphi, df X\rangle^2\\
&+F'(\varphi(f(x))) \bigl[
\mathrm{Hess}^N \varphi (df X,df X) +\langle \nabla^N \varphi , II
(X,X)\rangle
\bigr].
\end{split}
\]
If  $\varphi = \rho$ is the distance function, then the assumption
on the sectional curvature of $N$ implies
\[
\mathrm{Hess}^N \rho (Y,Y) \geq \frac{\sigma'}\sigma \bigl[\langle Y, Y\rangle -
\langle \nabla^N \rho, Y\rangle^2\bigr].
\]
Thus, assuming that $f$ is minimal and that $F'\leq 0$ and setting for ease of notation
$\rho_x=\rho(f(x))$,  we obtain
\[
\Delta (F\circ \rho\circ f)(x) \leq m \bigl (F'\frac {\sigma'}\sigma\bigr)(\rho_x)+
\bigl(F''-F'\frac{\sigma'}\sigma \bigr)(\rho_x) \sum_{i=1}^m \langle
\nabla^N \rho, df X_i\rangle^2,
\]
where $\{X_i\}$ is an orthonormal basis on $T_xM$.

Now, if $F=F_R$, then we have
\[
F_R' (r) = -
\frac{\int_{0}^{t}\sigma^{m-1}\left(  s\right)
ds}{\sigma^{m-1}\left(  t\right)  }<0
\]
and
\[
F''_R(r) = -1 - (m-1) \frac {\sigma'}\sigma F'_R,
\]
so substituting,
\[
\Delta (F\circ \rho\circ f)(x) \leq m \bigl (F'\frac
{\sigma'}\sigma\bigr)(\rho_x)- \bigl[(1+ m\frac{\sigma'}\sigma
F_R'(\rho_x)\bigr] \sum_{i=1}^m \langle
\nabla^N \rho, df X_i\rangle^2.
\]
Complete $\{df X_i\}_{i=1}^m$ to an orthonormal basis
$\{df X_i\}_{i=1}^m\cup \{X_j\}_{j=m+1}^n$ on $T_{f(x)}N$,  and
note that
\[
\sum_{i} \langle \nabla^N\rho, df X_i\rangle^2 + \sum_j \langle \nabla^N\rho,
Y_j\rangle^2 =1.
\]
Inserting this in the above inequality and using the assumption
\[
m\frac{\sigma'}\sigma F'_R=-m\frac{\sigma'}\sigma\frac{\int_{0}^{t}\sigma^{m-1}\left(  s\right)
ds}{\sigma^{m-1}\left(  t\right)  } \leq -1
\]
we finally obtain
\[
\begin{split}
\Delta (F\circ \rho\circ f)(x) &\leq
-\sum_i \langle \nabla^N\rho , df X_i\rangle^2 + m\frac{\sigma'}\sigma F'_R
\sum_j\langle \nabla^N\rho,Y_j\rangle^2 \\
&\leq
-\sum_i \langle \nabla^N\rho , df X_i\rangle^2  -
\sum_j\langle \nabla^N\rho,Y_j\rangle^2 = -1.
\end{split}
\]
Thus,
\[
\Delta\left(  F_{R}\circ\rho\circ f\right)  \leq-1\text{ on }f^{-1}%
(B_{R}^{N_{g}}),
\]
and since both $E_{f^{-1}(B_{R}^{N})}$ and $F_{R}\circ\rho\circ f$
vanish on  $\partial f^{-1}(B_{R}^N)$, the first assertion in the statement
follows from the comparison principle.

The second assertion follows  letting $R\to +\infty$, so that $f^{-1}(B_{R}^N)\nearrow M$
and $E_R (x)\nearrow E(x)$, while
\[
F_R(r) \nearrow \int_{r}^{R}\frac{\int_{0}^{t}\sigma^{m-1}\left(  s\right)
ds}{\sigma^{m-1}\left(  t\right)  }dt,
\]
and, as seen above, the assumption that $M^m_\sigma$ is not
$L^1$-Liouville amounts to the fact that the integral on the right
hand side is finite.
\end{proof}

Note that \eqref{F_integrability} amounts to the fact that the
$m$-dimensional model manifold $M_\sigma^m$ is not $L^1$-Liouville.

We also remark that, since $\sigma$ is non-decreasing,  for
every $n\geq m$
\[
\int_{r}^{R}\frac{\int_{0}^{t}\sigma^{n-1}\left(  s\right)
ds}{\sigma^{n-1}\left(  t\right)  }dt
\leq
\int_{r}^{R}\frac{\int_{0}^{t}\sigma^{m-1}\left(  s\right)
ds}{\sigma^{m-1}\left(  t\right)  }dt,
\]
so condition \eqref{F_integrability} also implies that the n-dimensional model
$N^n_\sigma$ is not $L^1$-Liouville, and  therefore, by Theorem~\ref{th_curv&l1}
the same holds for the manifold $N$.

\end{document}